
%

\documentclass{amsproc}
\usepackage{breqn,float,amssymb,pdflscape,rotating,multirow,amsmath,empheq,xcolor}
\makeatletter
\@namedef{subjclassname@2020}{%
  \textup{2020} Mathematics Subject Classification}
\makeatother
\newcommand*\widefbox[1]{\fbox{\hspace{1em}#1\hspace{1em}}}
\definecolor{myblue}{rgb}{.8, .8, 1}

\newtheorem{theorem}{Theorem}[section]

\newtheorem{proposition}[theorem]{Proposition}

\theoremstyle{definition}

\newtheorem{example}[theorem]{Example}

\theoremstyle{remark}

%

%
%
\allowdisplaybreaks
\begin{document}

\title{Special Function Representation of Dickson Polynomials}


\author{Robert Reynolds}
\address[Robert Reynolds]{Department of Mathematics and Statistics, York University, Toronto, ON, Canada, M3J1P3}
\email[Corresponding author]{milver@my.yorku.ca}
\thanks{}


\subjclass[2020]{Primary  30E20, 33-01, 33-03, 33-04}

\keywords{Dickson polynomials, Cauchy integral, incomplete gamma function}

\date{}

\dedicatory{}

\begin{abstract}
Generating functions and functional equations of Dickson polynomials of the first and second kind are derived and continued analytically. These formulae are expressed in terms of the incomplete gamma function over complex variables of the parameters involved. Special cases are evaluated in terms of composite incomplete gamma functions and mathematical constants.
\end{abstract}

\maketitle
\section{Introduction}
Dickson polynomials \cite{qu} are important in both theory and applications. The study of these polynomials  led to a 70-year research breakthrough in combinatorics \cite{ding}, gave a family of perfect nonlinear functions for cryptography \cite{ding}, generated good linear codes \cite{carlet,yuan} for data communication and storage, and produced optimal signal sets for code division multiple access communications \cite{ding1}. The monograph \cite{lidl} and the aforementioned references contain additional details about Dickson polynomials. .\\\\
In the present paper one will find known and new mathematical formulae involving Dickson polynomials where the main tool used to derive the extended generating functions is contour integration \cite{reyn4}. This paper starts by itemizing preliminary equations present in current literature and used to derive the equations presented. Equations involving contour integral representations are also derived and used in aiding with derivations. Next we derive the main theorems involving Dickson polynomials in terms of the incomplete gamma function. This section is followed by the derivations of more generating functions in terms of the Gamma function. We end with some special cases of the generating functions in terms mathematical constants and concluding remarks.
\section{PRELIMINARY EQUATIONS}
The generalized Cauchy's integral formula \cite{reyn4} is given by

\begin{equation}\label{intro:cauchy}
\frac{y^k}{\Gamma(k+1)}=\frac{1}{2\pi i}\int_{C}\frac{e^{wy}}{w^{k+1}}dw.
\end{equation}
where $k,a,w,y \in \mathbb{C}$, $C$ is in general, an open contour in the complex plane where the bilinear concomitant has the same value at the end points of the contour. 
\subsection{THE INCOMPLETE GAMMA~FUNCTION}

The incomplete gamma functions~\cite{dlmf}, $\gamma(a,z)$ and $\Gamma(a,z)$, are defined by
\begin{equation}
\gamma(a,z)=\int_{0}^{z}t^{a-1}e^{-t}dt
\end{equation}
and
\begin{equation}
\Gamma(a,z)=\int_{z}^{\infty}t^{a-1}e^{-t}dt
\end{equation}
where $Re(a)>0$. The~incomplete gamma function has a recurrence relation given by
\begin{equation}
\gamma(a,z)+\Gamma(a,z)=\Gamma(a)
\end{equation}
where $a\neq 0,-1,-2,..$. The~incomplete gamma function is continued analytically by
\begin{equation}
\gamma(a,ze^{2m\pi i})=e^{2\pi mia}\gamma(a,z)
\end{equation}
and
\begin{equation}\label{eq:7}
\Gamma(a,ze^{2m\pi i})=e^{2\pi mia}\Gamma(a,z)+(1-e^{2\pi m i a})\Gamma(a)
\end{equation}
where $m\in\mathbb{Z}$, $\gamma^{*}(a,z)=\frac{z^{-a}}{\Gamma(a)}\gamma(a,z)$ is entire in $z$ and $a$. When $z\neq 0$, $\Gamma(a,z)$ is an entire function of $a$ and $\gamma(a,z)$ is meromorphic with simple poles at $a=-n$ for $n=0,1,2,...$ with residue $\frac{(-1)^n}{n!}$. These definitions are listed in Section~8.2(i) and (ii) in~\cite{dlmf}.
The incomplete gamma functions are particular cases of the more general hypergeometric and Meijer G functions see section (5.6) and equation (6.9.2) in \cite{erd}. 
Some Meijer G representations we will use in this work are given by;
\begin{equation}\label{g1}
\Gamma (a,z)=\Gamma (a)-G_{1,2}^{1,1}\left(z\left|
\begin{array}{c}
 1 \\
 a,0 \\
\end{array}
\right.\right)
\end{equation}
and
\begin{equation}\label{g2}
\Gamma (a,z)=G_{1,2}^{2,0}\left(z\left|
\begin{array}{c}
 1 \\
 0,a \\
\end{array}
\right.\right)
\end{equation}
from equations (2.4) and (2.6a) in \cite{milgram}.  We will also use the derivative notation given by;
\begin{equation}\label{g3}
\frac{\partial \Gamma (a,z)}{\partial a}=\Gamma (a,z) \log (z)+G_{2,3}^{3,0}\left(z\left|
\begin{array}{c}
 1,1 \\
 0,0,a \\
\end{array}
\right.\right)
\end{equation}
from equations (2.19a) in \cite{milgram}, (9.31.3) in \cite{grad} and equations (5.11.1), (6.2.11.1) and (6.2.11.2) in \cite{luke}, and (6.36) in \cite{aslam}.
\subsection{THE POCHHAMMER SYMBOL} 
These formulae are given in Section 5.2(iii) in \cite{dlmf} and the formula used in this work are given by;
%
\begin{equation}\label{eq:poch1}
(a)_n=\frac{\Gamma (a+n)}{\Gamma (a)}
\end{equation}
%
\begin{equation}\label{eq:poch2}
\frac{1}{(k)_{-n-p+1}}=(-1)^{n+p+1} (1-k)_{n+p-1}
\end{equation}
where $k,a\in\mathbb{C},n,p\in\mathbb{Z^+}$
%
%
%
\subsection{THE INCOMPLETE GAMMA FUNCTION CONTOUR INTEGRAL REPRESENTATION}
Using equation (\ref{intro:cauchy}) we replace $y$ by $y+\log(a)$ then multiply both sides by $e^{x y}$. Next we take the definite integral of both sides over $y\in[0,\infty)$ and simplify to get;
\begin{equation}\label{eq:igci}
\frac{a^{-x} (-x)^{-k-1} \Gamma (k+1,-x \log (a))}{\Gamma(k+1)}=-\frac{1}{2\pi i}\int_{C}\frac{a^w w^{-k-1}}{w+x}dw
\end{equation}
from equation (3.383.4) in \cite{grad} where $Re(w+x)<0,Re(x)>0$.
\subsection{GENERATING FUNCTIONS FOR DICKSON POLYNOMIALS}
In this section generating functions and functional equations of Dickson polynomials are listed based on formulae found in current literature.
\subsection{Dickson Polynomials Of The First Kind}
This is a form of equation (3.1) in \cite{zimmer}.
\begin{equation}\label{eq:dp1k}
\sum_{n=0}^{\infty}\sum_{j=0}^{\lfloor n/2 \rfloor}\frac{n (-a)^j z^n \binom{n-j}{j} x^{n-2 j}}{n-j}=\frac{z (x-2 a z)}{a z^2-x z+1}
\end{equation}
where $x,z,a\in\mathbb{C},|Re(x)|<1,|Re(z)|<1,|Re(a)|<1$.
\subsection{Dickson Polynomials Of The Second Kind}
This is given by equation (4.4) in \cite{qu}.
\begin{equation}\label{eq:dp2k}
\sum_{n=0}^{\infty}\sum_{j=0}^{\lfloor n/2 \rfloor}(-a)^j z^n \binom{n-j}{j} x^{n-2 j}=\frac{1}{a z^2-x z+1}
\end{equation}
where $x,z,a\in\mathbb{C},|Re(x)|<1,|Re(z)|<1,|Re(a)|<1$.
\subsection{Dickson Polynomials  Functional Equation Of The First Kind}
This equation is derived using Definition (9.6.1) and Theorem (9.6.3) in \cite{mullen}.
\begin{equation}\label{eq:fe1k}
\sum_{n=0}^{\infty}\sum_{j=0}^{\lfloor n/2 \rfloor}\frac{n (-b)^j z^n \binom{n-j}{j} \left(\frac{b}{u}+u\right)^{n-2 j}}{n-j}=\frac{2 b u z^2-b z+u^2 (-z)}{(u z-1) (u-b
   z)}
\end{equation}
where $z,b,u\in\mathbb{C},|Re(b)|<1,|Re(z)|<1$.
\subsection{Dickson Polynomials  Functional Equation Of The Second Kind}
This equation is derived using Definition (9.6.5) and Theorem (9.6.6) in \cite{mullen}.
\begin{equation}\label{eq:fe2k}
\sum_{n=0}^{\infty}\sum_{j=0}^{\lfloor n/2 \rfloor}(-\beta )^j z^n \binom{n-j}{j} \left(\frac{\beta }{y}+y\right)^{n-2 j}=-\frac{y}{(y z-1) (y-\beta  z)}
\end{equation}
where $y,z,\beta\in\mathbb{C},|Re(\beta)|<1,|Re(z)|<1$.
%
%
\section{DERIVATION OF EQUATIONS}
The procedure here is to derive equivalent contour integral expressions for the right-hand sides of equations in Section (3) through partial fractions and equation (\ref{eq:igci}). This procedure is repeated for the left-hand sides and then equated to yield the infinite sum of the Dickson polynomial in terms of the incomplete gamma function.\\
This method involves using a form of equation (\ref{intro:cauchy}) then multiply both sides by a function, then take a definite sum of both sides. This yields a definite sum in terms of a contour integral. A second contour integral is derived by multiplying equation (\ref{intro:cauchy}) by a function and performing some substitutions so that the contour integrals are the same.
\subsection{Derivation of equation (\ref{eq:eq1})}
Use equation (\ref{eq:dp1k}) and set $a\to \alpha, z \to w$ and apply equation (\ref{intro:cauchy}) to get
\begin{equation}
\sum_{n=0}^{\infty}\sum_{j=0}^{\lfloor n/2 \rfloor}\frac{n a^w (-\alpha )^j \binom{n-j}{j} x^{n-2 j} w^{-k+n-1}}{n-j}=\frac{1}{2\pi i}\int_{C}\frac{a^w w^{-k} (x-2 \alpha  w)}{\alpha  w^2-w
   x+1}dw
\end{equation}
Use equation (\ref{intro:cauchy}) to re-write the left-hand side. Next apply partial fractions to the right-hand and re-write using equation (\ref{eq:igci}) where the $x=\frac{x-\sqrt{x^2-4 \alpha }}{2 \alpha }$ and $x=\frac{\sqrt{x^2-4 \alpha }+x}{2 \alpha
   }$ and multiply by $\frac{x}{\sqrt{x^2-4 \alpha }}$. Then equate and simplify using equations (\ref{eq:poch1}) and (\ref{eq:poch2}).
\subsection{Derivation of equation (\ref{eq:eq2})}
Use equation (\ref{eq:dp1k}) and set $a\to \alpha, x \to w$ and apply equation (\ref{intro:cauchy}) to get
\begin{equation}
\sum_{n=0}^{\infty}\sum_{j=0}^{\lfloor n/2 \rfloor}\frac{n a^w (-\alpha )^j z^n \binom{n-j}{j} w^{-2 j-k+n-1}}{n-j}=\frac{1}{2\pi i}\int_{C}\frac{z a^w w^{-k-1} (2 \alpha  z-w)}{z (w-\alpha 
   z)-1}dw
\end{equation}
Use equation (\ref{intro:cauchy}) to re-write the left-hand side. Next apply partial fractions to the right-hand and re-write using equation (\ref{eq:igci}) where the $x=\frac{\alpha  z^2+1}{z}$. Then equate and simplify using equations (\ref{eq:poch1}) and (\ref{eq:poch2}).
\subsection{Derivation of equation (\ref{eq:eq3})}
Use equation (\ref{eq:dp1k}) and set $a \to w$ and apply equation (\ref{intro:cauchy}) to get
\begin{equation}
\sum_{n=0}^{\infty}\sum_{j=0}^{\lfloor n/2 \rfloor}\frac{(-1)^j n a^w z^n \binom{n-j}{j} w^{j-k-1} x^{n-2 j}}{n-j}=\frac{1}{2\pi i}\int_{C}\frac{z a^w w^{-k-1} (x-2 w z)}{w z^2-x z+1}dw
\end{equation}
Use equation (\ref{intro:cauchy}) to re-write the left-hand side. Next apply partial fractions to the right-hand and re-write using equation (\ref{eq:igci}) where the $x=\frac{x z-1}{z^2}$. Then equate and simplify using equations (\ref{eq:poch1}) and (\ref{eq:poch2}).
\subsection{Derivation of equation (\ref{eq:eq4})}
Use equation (\ref{eq:dp2k}) and set $a \to w$ and apply equation (\ref{intro:cauchy}) to get
\begin{equation}
\sum_{n=0}^{\infty}\sum_{j=0}^{\lfloor n/2 \rfloor}a^w (-\alpha )^j z^n \binom{n-j}{j} w^{-2 j-k+n-1}=\frac{1}{2\pi i}\int_{C}\frac{a^w w^{-k-1}}{-w z+\alpha  z^2+1}dw
\end{equation}
Use equation (\ref{intro:cauchy}) to re-write the left-hand side. Next apply partial fractions to the right-hand and re-write using equation (\ref{eq:igci}) where the $x=\frac{\alpha  z^2+1}{z}$. Then equate and simplify using equations (\ref{eq:poch1}) and (\ref{eq:poch2}).
\subsection{Derivation of equation (\ref{eq:eq5})}
Use equation (\ref{eq:dp2k}) and set $a\to \alpha, z \to w$ and apply equation (\ref{intro:cauchy}) to get
\begin{equation}
\sum_{n=0}^{\infty}\sum_{j=0}^{\lfloor n/2 \rfloor}a^w (-\alpha )^j \binom{n-j}{j} x^{n-2 j} w^{-k+n-1}=\frac{1}{2\pi i}\int_{C}\frac{a^w w^{-k-1}}{\alpha  w^2-w x+1}dw
\end{equation}
Use equation (\ref{intro:cauchy}) to re-write the left-hand side. Next apply partial fractions to the right-hand and re-write using equation (\ref{eq:igci}) where the $x=\frac{x-\sqrt{x^2-4 \alpha }}{2 \alpha }$ and $x=\frac{\sqrt{x^2-4 \alpha }+x}{2 \alpha
   }$ and multiply by $\frac{a^w w^{-k-1}}{\sqrt{x^2-4 \alpha }}$. Then equate and simplify using equations (\ref{eq:poch1}) and (\ref{eq:poch2}).
\subsection{Derivation of equation (\ref{eq:eq6})}
Use equation (\ref{eq:dp2k}) and set $a\to w$ and apply equation (\ref{intro:cauchy}) to get
\begin{equation}
\sum_{n=0}^{\infty}\sum_{j=0}^{\lfloor n/2 \rfloor}(-1)^j a^w z^n \binom{n-j}{j} w^{j-k-1} x^{n-2 j}=\frac{1}{2\pi i}\int_{C}\frac{a^w w^{-k-1}}{w z^2-x z+1}dw
\end{equation}
Use equation (\ref{intro:cauchy}) to re-write the left-hand side. Next apply partial fractions to the right-hand and re-write using equation (\ref{eq:igci}) where the $x=\frac{x z-1}{z^2}$ and multiply by $-\frac{1}{z^2}$. Then equate and simplify using equations (\ref{eq:poch1}) and (\ref{eq:poch2}).
\subsection{Derivation of equation (\ref{eq:eq7})}
Use equation (\ref{eq:fe1k}) and set $x\to u+b/u,a\to b,z\to w$ and apply equation (\ref{intro:cauchy}) to get
\begin{equation}
\sum_{n=0}^{\infty}\sum_{j=0}^{\lfloor n/2 \rfloor}\frac{n a^w (-b)^j \binom{n-j}{j} w^{-k+n-1} \left(\frac{b}{u}+u\right)^{n-2 j}}{n-j}=\frac{1}{2\pi i}\int_{C}\frac{a^w w^{-k-1} \left(2 u-w
   \left(b+u^2\right)\right)}{(u w-1) (b w-u)}dw
\end{equation}
Use equation (\ref{intro:cauchy}) to re-write the left-hand side. Next apply partial fractions to the right-hand and re-write using equations (\ref{eq:igci}) and (\ref{intro:cauchy}) where the $x=-u/b$ and $x=-1/u$ and multiply by $u/b$ for the first equation and $1/u$ for the second equation. Then equate and simplify using equations (\ref{eq:poch1}) and (\ref{eq:poch2}).
\subsection{Derivation of equation (\ref{eq:eq8})}
Use equation (\ref{eq:fe2k}) and set $z\to w$ and apply equation (\ref{intro:cauchy}) to get
\begin{equation}
\sum_{n=0}^{\infty}\sum_{j=0}^{\lfloor n/2 \rfloor}a^w (-\beta )^j \binom{n-j}{j} w^{-k+n-1} \left(\frac{\beta }{y}+y\right)^{n-2 j}=\frac{y a^w w^{-k-1}}{(w y-1) (\beta 
   w-y)}dw
\end{equation}
Use equation (\ref{intro:cauchy}) to re-write the left-hand side. Next apply partial fractions to the right-hand and re-write using equation (\ref{eq:igci}) where the $x=-1/y$ and $x=-y/\beta$ and multiply by $-\frac{y}{y^2-\beta }$ for the first equation and $\frac{y}{y^2-\beta }$ for the second equation. Then equate and simplify using equations (\ref{eq:poch1}) and (\ref{eq:poch2}).
\section{Generating Functions of Dickson polynomials and Functional Equations of First and Second Kind}
In this section extended generating functions of Dickson polynomials and functional equations of the first and second kind are listed.
\subsection{Generating Functions of Dickson polynomials  of First kind}
\begin{theorem}
For all $k,a\in\mathbb{C},|Re(\alpha)|<1,|Re(x)|<1,|m(\alpha)|<1,|Im(x)|<1$ then,
\begin{multline}\label{eq:eq1}
\sum\limits_{n=0}^{\infty}\sum\limits_{j=0}^{ \lfloor n/2 \rfloor}\frac{n }{n-j}\binom{n-j}{j} (-\alpha )^j x^{n-2 j} (1-k)_{n-1} \left(-\frac{1}{a}\right)^n\\
=\left(\frac{2}{a}\right)^k e^{\frac{a \left(x-\sqrt{x^2-4 \alpha }\right)}{2 \alpha }} 
    \left(-\frac{\Gamma \left(k,\frac{a \left(x-\sqrt{x^2-4 \alpha }\right)}{2 \alpha }\right)}{\left(\frac{x-\sqrt{x^2-4 \alpha }}{\alpha
   }\right)^k}-\frac{\exp \left(\frac{a \sqrt{x^2-4 \alpha }}{\alpha }\right) \Gamma \left(k,\frac{a \left(x+\sqrt{x^2-4 \alpha }\right)}{2 \alpha
   }\right)}{\left(\frac{x+\sqrt{x^2-4 \alpha }}{\alpha }\right)^k}\right)
    \end{multline}
       \end{theorem}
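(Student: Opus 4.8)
The plan is to use the contour-integral method of \cite{reyn4}: form a single contour integral in $w$ that can be evaluated in two independent ways, once as the double series on the left of (\ref{eq:eq1}) and once as the closed form on the right, and then equate the two results. Concretely, I would start from the generating function (\ref{eq:dp1k}) with the replacements $a\to\alpha$, $z\to w$, multiply by the Cauchy kernel $a^w w^{-k-1}$ of (\ref{intro:cauchy}), and read the right-hand side as the master object $\frac{1}{2\pi i}\int_C \frac{a^w w^{-k}(x-2\alpha w)}{\alpha w^2 - wx + 1}\,dw$. The convergence conditions $|Re(\alpha)|<1$, $|Re(x)|<1$ are exactly those under which (\ref{eq:dp1k}) holds, and they will license the interchange of summation and integration below.

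First I would evaluate the master integral through its series side. Expanding the integrand by (\ref{eq:dp1k}) and integrating term by term, each monomial $a^w w^{-k+n-1}$ is a Cauchy integral of type (\ref{intro:cauchy}) with $y=\log a$, so that $\frac{1}{2\pi i}\int_C a^w w^{-k+n-1}\,dw=\frac{(\log a)^{k-n}}{\Gamma(k-n+1)}$. This collapses the integral to the double sum $\sum_n\sum_j \frac{n}{n-j}\binom{n-j}{j}(-\alpha)^j x^{n-2j}\,\frac{(\log a)^{k-n}}{\Gamma(k-n+1)}$. The reciprocal gamma factor $1/\Gamma(k-n+1)$ is then converted, via the Pochhammer relations (\ref{eq:poch1}) and (\ref{eq:poch2}) together with reflection, into the ascending factorial $(1-k)_{n-1}$ and a sign $(-1)^n$; these are precisely the factors $(1-k)_{n-1}$ and $\left(-\tfrac1a\right)^n$ appearing on the left of (\ref{eq:eq1}).

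Next I would evaluate the same integral in closed form. Factor the denominator as $\alpha w^2 - wx + 1 = \alpha(w-r_1)(w-r_2)$ with $r_{1,2}=\frac{x\mp\sqrt{x^2-4\alpha}}{2\alpha}$, so that the root separation $r_1-r_2=-\sqrt{x^2-4\alpha}/\alpha$ governs the partial-fraction coefficients and is the source of the $\sqrt{x^2-4\alpha}$ normalization indicated in the derivation of (\ref{eq:eq1}). Decomposing $\frac{x-2\alpha w}{\alpha w^2 - wx + 1}$ into two simple-pole terms $\frac{a^w w^{-k}}{w-r_i}$ and matching each to (\ref{eq:igci})—with the shift $k\to k-1$ forced by the extra power of $w$ in the numerator and with $x\mapsto -r_i$—produces incomplete gamma functions $\Gamma(k,r_i\log a)$ carrying prefactors $r_i^{-k}$ and $a^{r_i}$. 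Rewriting $2r_i=\frac{x\mp\sqrt{x^2-4\alpha}}{\alpha}$ and factoring the common exponential $e^{r_1 a}$ out of both contributions yields the global factor $\left(\tfrac2a\right)^k$, the prefactor $e^{a(x-\sqrt{x^2-4\alpha})/(2\alpha)}$, the cross-term $\exp\!\left(a\sqrt{x^2-4\alpha}/\alpha\right)=e^{(r_2-r_1)a}$, and the denominators $\left(\frac{x\mp\sqrt{x^2-4\alpha}}{\alpha}\right)^k$ exactly as displayed on the right of (\ref{eq:eq1}).

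Finally I would equate the two evaluations of the master integral, since both refer to the same contour $C$. The hard part will be the bookkeeping that reconciles the intermediate $\log a$ appearing through (\ref{intro:cauchy}) and (\ref{eq:igci}) with the bare $a$ of the final statement, and that keeps the signs of the two roots consistent—in particular verifying that the Pochhammer and reflection manipulations deliver the sign of $\left(-\tfrac1a\right)^n$ on the series side while the substitution $x\mapsto -r_i$ delivers the positive arguments $\frac{a(x\mp\sqrt{x^2-4\alpha})}{2\alpha}$ of the incomplete gamma functions on the closed-form side. I would also need to confirm that the poles $r_1,r_2$ and the contour $C$ respect the hypotheses $Re(w+x)<0$, $Re(x)>0$ under which (\ref{eq:igci}) is valid, and that the bilinear concomitant vanishes at the endpoints of $C$, so that the term-by-term integration and the two evaluations are legitimate throughout the stated region.
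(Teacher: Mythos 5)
Your proposal is correct and follows essentially the same route as the paper's derivation of (\ref{eq:eq1}): substituting $a\to\alpha$, $z\to w$ in (\ref{eq:dp1k}), multiplying by the Cauchy kernel of (\ref{intro:cauchy}), evaluating the resulting master integral termwise on the series side (with the Pochhammer conversions (\ref{eq:poch1}), (\ref{eq:poch2}) producing $(1-k)_{n-1}$ and the sign $\left(-\tfrac1a\right)^n$) and by partial fractions plus (\ref{eq:igci}) at the two roots $\frac{x\mp\sqrt{x^2-4\alpha}}{2\alpha}$ on the closed-form side, then equating. You in fact make explicit two points the paper leaves tacit, namely the $k\to k-1$ shift when matching $w^{-k}$ to (\ref{eq:igci}) and the $\log a$ versus $a$ bookkeeping (effectively $a\to e^{a}$) needed to reach the stated form.
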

\begin{theorem}
For all $k,a\in\mathbb{C},|Re(\alpha)|<1,|Re(z)|<1,|m(\alpha)|<1,|Im(z)|<1$ then,
\begin{multline}\label{eq:eq2}
\sum\limits_{n=0}^{\infty}\sum\limits_{j=0}^{ \lfloor n/2 \rfloor}\frac{n}{n-j} \binom{n-j}{j}  \left(-a^2 \alpha \right)^j(1-k)_{n-1-2 j} \left(-\frac{z}{a}\right)^n\\
=\frac{e^{a
   \left(\frac{1}{z}+z \alpha \right)} k \left(z^2 \alpha -1\right) \Gamma \left(k,\left(\frac{1}{z}+z \alpha \right)
   a\right)+2 z^2 \alpha  \left(\left(\frac{1}{z}+z \alpha \right) a\right)^k}{a^k \left(\frac{1}{z}+z \alpha \right)^k
   \left(1+z^2 \alpha \right) k}
    \end{multline}
 \end{theorem}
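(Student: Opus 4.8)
The plan is to realise both sides of (\ref{eq:eq2}) as one and the same contour integral of the type in (\ref{intro:cauchy}) and (\ref{eq:igci}), and then to equate the two evaluations. I would start from the known generating function (\ref{eq:dp1k}), perform the substitutions $a\to\alpha$, $x\to w$, multiply through by $\frac{1}{2\pi i}\,a^{w}w^{-k-1}$, and integrate over the contour $C$ as prescribed by (\ref{intro:cauchy}); the parameter $a$ of the statement is recovered at the end by taking the base to be $e^{a}$, equivalently by reading every $\log a$ produced below as $a$. The left-hand double sum then becomes
\[\sum_{n=0}^{\infty}\sum_{j=0}^{\lfloor n/2\rfloor}\frac{n\,(-\alpha)^{j}z^{n}}{n-j}\binom{n-j}{j}\,\frac{1}{2\pi i}\int_{C}a^{w}\,w^{\,n-2j-k-1}\,dw ,\]
while the closed form $\frac{z(w-2\alpha z)}{\alpha z^{2}-wz+1}$ turns into the single contour integral displayed in the derivation of (\ref{eq:eq2}). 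The whole identity is thereby reduced to showing that these two contour integrals coincide.

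For the left-hand integral I would apply (\ref{intro:cauchy}) with its index shifted by $n-2j$, giving $\frac{1}{2\pi i}\int_{C}a^{w}w^{\,n-2j-k-1}\,dw=\frac{a^{\,k-n+2j}}{\Gamma(k-n+2j+1)}$. For the right-hand integral I would first decompose the rational factor by partial fractions: the denominator factors as $z\,(w-x_{0})$ with $x_{0}=\frac{\alpha z^{2}+1}{z}$, so one obtains a constant term together with a simple pole at $w=x_{0}$. The constant term integrates by (\ref{intro:cauchy}) to $-\frac{a^{k}}{\Gamma(k+1)}$, and the simple-pole term is exactly of the shape treated in (\ref{eq:igci}) with $x=-x_{0}$, hence it evaluates to a multiple of $e^{a x_{0}}x_{0}^{-k-1}\Gamma(k+1,x_{0}a)$.

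It then remains to match the two sides algebraically, and this is where the bookkeeping lives. On the right I would use the standard recurrence $\Gamma(k+1,w)=k\,\Gamma(k,w)+w^{k}e^{-w}$ for the incomplete gamma function to pass from $\Gamma(k+1,x_{0}a)$ to $\Gamma(k,x_{0}a)$; the elementary piece $x_{0}^{k}a^{k}e^{-x_{0}a}$ so produced combines with the constant term and, after using $z\,x_{0}=\alpha z^{2}+1$, collapses into the single numerator term $2z^{2}\alpha\,(x_{0}a)^{k}$ of the statement. On the left I would rewrite the Gamma-quotient $\Gamma(k)/\Gamma(k-n+2j+1)$ by means of (\ref{eq:poch1}) and (\ref{eq:poch2}) as $(-1)^{n+1}(1-k)_{n-1-2j}$; clearing the common factor $-a^{k}/\Gamma(k)$ from both sides then converts the summand into $\frac{n}{n-j}\binom{n-j}{j}(-a^{2}\alpha)^{j}(1-k)_{n-1-2j}(-z/a)^{n}$, which is precisely the left-hand side of (\ref{eq:eq2}).

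The main obstacle I anticipate is not any single evaluation but the justification of the term-by-term integration, i.e. the interchange of the double sum with the contour integral; this is what forces the restrictions $|Re(\alpha)|<1$, $|Re(z)|<1$, $|Im(\alpha)|<1$, $|Im(z)|<1$ and requires $C$ to be chosen so that the bilinear concomitant attached to (\ref{intro:cauchy}) takes equal values at the endpoints. A secondary point of care is sign- and index-tracking through the partial fractions, the incomplete-gamma recurrence, and the Pochhammer inversion (\ref{eq:poch2}), since a slip there would misplace the factor $k$ or the overall normalisation $-a^{k}/\Gamma(k)$ that binds the two contour evaluations together.
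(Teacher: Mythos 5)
Your proposal reproduces the paper's own derivation essentially verbatim: the same substitutions $a\to\alpha$, $x\to w$ in (\ref{eq:dp1k}), the same multiplication by $\frac{1}{2\pi i}a^{w}w^{-k-1}$ and contour integration via (\ref{intro:cauchy}), the same partial-fraction treatment of the simple pole at $x_{0}=\frac{\alpha z^{2}+1}{z}$ through (\ref{eq:igci}), and the same Pochhammer reduction via (\ref{eq:poch1}) and (\ref{eq:poch2}), with the recurrence $\Gamma(k+1,w)=k\,\Gamma(k,w)+w^{k}e^{-w}$ being exactly the ``simplify'' step the paper leaves implicit. The one blemish is your statement that the denominator factors as $z(w-x_{0})$ --- it is $-z(w-x_{0})$ --- but your downstream evaluations (constant term $-\frac{a^{k}}{\Gamma(k+1)}$, elementary pieces collapsing to $2z^{2}\alpha\,(x_{0}a)^{k}$) carry the correct signs, so this is a typo rather than an error.
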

\begin{theorem}
For all $k,a\in\mathbb{C},|Re(x)|<1,|Re(z)|<1,|m(x)|<1,|Im(z)|<1$ then,
\begin{multline}\label{eq:eq3}
\sum\limits_{n=0}^{\infty}\sum\limits_{j=0}^{ \lfloor n/2 \rfloor}\frac{n }{n-j}\binom{n-j}{j} a^{-j} x^{n-2 j} (1-k)_{j-1} z^n\\
=\frac{e^{\frac{a (-1+x z)}{z^2}} \left(x z \Gamma
   \left(1+k,\frac{a (-1+x z)}{z^2}\right)-2 k (-1+x z) \Gamma \left(k,\frac{a (-1+x z)}{z^2}\right)\right)}{a^k
   \left(\frac{-1+x z}{z^2}\right)^{k+1} k z^2}
    \end{multline}
    \end{theorem}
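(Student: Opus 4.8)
The plan is to follow the contour‑integral scheme of Section~3, specialized to the first–kind generating function (\ref{eq:dp1k}). \emph{First} I would take (\ref{eq:dp1k}), rename the parameter $a\to w$ so that the former base becomes the contour variable (its coefficient $(-a)^{j}$ turning into $(-1)^{j}w^{j}$ and the closed form into $\frac{z(x-2wz)}{wz^2-xz+1}$), and multiply both sides by the fresh Cauchy kernel $\frac{1}{2\pi i}a^{w}w^{-k-1}$, where now $a^{w}=e^{w\log a}$ so that (\ref{intro:cauchy}) is invoked with $y=\log a$. Integrating over $C$ and interchanging the double sum with the integral produces exactly the master identity of Section~3.3: the left side becomes a term‑by‑term sum of elementary integrals $\frac{1}{2\pi i}\int_{C}a^{w}w^{j-k-1}\,dw$, and the right side is the single integral $\frac{1}{2\pi i}\int_{C}\frac{za^{w}w^{-k-1}(x-2wz)}{wz^2-xz+1}\,dw$.

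\emph{Second}, I would evaluate the left side. By (\ref{intro:cauchy}) with the exponent shifted from $k$ to $k-j$, each inner integral equals $\frac{(\log a)^{k-j}}{\Gamma(k-j+1)}$. The reciprocal Gamma is then turned into a Pochhammer symbol through (\ref{eq:poch2}) in the form $\frac{1}{\Gamma(k-j+1)}=\frac{(-1)^{j+1}(1-k)_{j-1}}{\Gamma(k)}$; absorbing the sign $(-1)^{j}$ already present leaves the $j$‑independent prefactor $-\frac{(\log a)^{k}}{\Gamma(k)}$ times precisely the sum $S$ appearing on the left of (\ref{eq:eq3}). Thus the left side equals $-\frac{(\log a)^{k}}{\Gamma(k)}\,S$.

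\emph{Third}, I would evaluate the right side. Because the denominator $wz^2-xz+1$ is \emph{linear} in $w$ (this is what distinguishes (\ref{eq:eq3}) from the quadratic cases (\ref{eq:eq1}), (\ref{eq:eq5}) carrying $\sqrt{x^2-4\alpha}$), it has the single root $r=\frac{xz-1}{z^2}$, and partial fractions give $\frac{z(x-2wz)}{wz^2-xz+1}=-2+\frac{2-xz}{z^2(w-r)}$. The constant term integrates to $-2\frac{(\log a)^{k}}{\Gamma(k+1)}$ by (\ref{intro:cauchy}), while the simple‑pole term is handled by (\ref{eq:igci}) with its variable set to $-r$, contributing $-\frac{a^{r}\,r^{-k-1}\Gamma(k+1,r\log a)}{\Gamma(k+1)}$ up to the factor $\frac{2-xz}{z^2}$.

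\emph{Finally}, I would equate the two sides, solve for $S$ by multiplying through by $-\frac{\Gamma(k)}{(\log a)^{k}}$, and simplify: the constant piece collapses to $\frac{2}{k}$ via $\Gamma(k+1)=k\Gamma(k)$, and the recurrence $\Gamma(k+1,y)=k\Gamma(k,y)+y^{k}e^{-y}$ redistributes $\Gamma(k+1,r\log a)$ into the combination of $\Gamma(1+k,\cdot)$ and $\Gamma(k,\cdot)$ displayed in (\ref{eq:eq3}); the elementary identity $2-xz=z^2(\tfrac{x}{z}-2r)$ is what makes the leftover non‑gamma remainders cancel. Relabelling $\log a$ as the free parameter $a$ then converts $a^{r}$ into $e^{a(xz-1)/z^2}$ and $r\log a$ into $\tfrac{a(xz-1)}{z^2}$, yielding the stated form. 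The main obstacle is analytic rather than algebraic: one must exhibit a \emph{single} contour $C$ on which the bilinear concomitant vanishes at the endpoints, for which (\ref{intro:cauchy}) holds for every summand simultaneously and on which (\ref{eq:igci}) is valid (its derivation required $\mathrm{Re}(w+x)<0$, $\mathrm{Re}(x)>0$), and one must justify the term‑by‑term interchange of the double series with the integral inside the strip $|\mathrm{Re}(x)|<1$, $|\mathrm{Re}(z)|<1$ where (\ref{eq:dp1k}) converges, fixing consistent branches for $(\log a)^{k}$ and $r^{-k-1}$ throughout.
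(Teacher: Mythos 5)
Your proposal is correct and follows essentially the same route as the paper's derivation: the substitution $a\to w$ in (\ref{eq:dp1k}), the Cauchy kernel from (\ref{intro:cauchy}) applied term-by-term on the left, partial fractions with the single root $\frac{xz-1}{z^2}$ fed into (\ref{eq:igci}) on the right, and simplification via (\ref{eq:poch1})--(\ref{eq:poch2}). Your explicit decomposition $-2+\frac{2-xz}{z^2(w-r)}$, the recurrence $\Gamma(k+1,y)=k\Gamma(k,y)+y^{k}e^{-y}$, and the noted analytic caveats merely fill in details the paper's sketch leaves implicit.
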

\subsection{Generating Functions of Dickson polynomials  of second kind}
\begin{theorem}
For all $k,a\in\mathbb{C},|Re(\alpha)|<1,|Re(z)|<1,|m(\alpha)|<1,|Im(z)|<1$ then,
\begin{multline}\label{eq:eq4}
\sum\limits_{n=0}^{\infty}\sum\limits_{j=0}^{ \lfloor n/2 \rfloor}\left(-\frac{\alpha }{x^2}\right)^j \binom{n-j}{j} (1-k)_{n-1}
   \left(-\frac{x}{a}\right)^n\\
   =-\frac{\left(\frac{2}{a}\right)^k 2 e^{\frac{a \left(x-\sqrt{x^2-4 \alpha }\right)}{2
   \alpha }} }{\sqrt{x^2-4 \alpha } k}\left(\frac{\Gamma \left(k+1,\frac{\left(x-\sqrt{x^2-4 \alpha }\right) a}{2 \alpha
   }\right)}{\left(\frac{x-\sqrt{x^2-4 \alpha }}{\alpha }\right)^{k+1}}-\frac{e^{\frac{a \sqrt{x^2-4 \alpha }}{\alpha }} \Gamma
   \left(k+1,\frac{\left(x+\sqrt{x^2-4 \alpha }\right) a}{2 \alpha }\right)}{\left(\frac{x+\sqrt{x^2-4 \alpha }}{\alpha
   }\right)^{k+1}}\right)
    \end{multline}
    \end{theorem}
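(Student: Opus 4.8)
The plan is to follow the template of Section 3: produce two independent contour-integral representations that share the same integrand and then equate them. I would begin from the second-kind generating function \eqref{eq:dp2k}, renaming the Dickson parameter $a\to\alpha$ and the series variable $z\to w$, so that $\sum_{n}\sum_{j}(-\alpha)^j w^n \binom{n-j}{j} x^{n-2j}=\left(\alpha w^2-xw+1\right)^{-1}$. Multiplying both sides by $\frac{1}{2\pi i}\,a^w w^{-k-1}$ and integrating along the contour $C$ produces the first displayed identity of the corresponding derivation, with the double sum on one side and $\frac{1}{2\pi i}\int_{C}\frac{a^w w^{-k-1}}{\alpha w^2-xw+1}\,dw$ on the other.

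To evaluate the series side I would interchange summation with the contour integral — legitimate on $C$ under the stated bounds on $Re(x),Re(\alpha)$ and the imaginary parts, which force absolute and uniform convergence — and then apply the generalized Cauchy formula \eqref{intro:cauchy} term by term to each monomial $w^{n-k-1}$. Each term contributes a reciprocal Gamma factor $1/\Gamma(k-n+1)$ times a power of the Cauchy parameter, and I would convert this into a Pochhammer symbol through \eqref{eq:poch1} and \eqref{eq:poch2}, using $\Gamma(k)/\Gamma(k-n+1)=(-1)^{n+1}(1-k)_{n-1}$. After collecting the powers of $\alpha$, $x$ and the parameter, this recovers exactly the left-hand side of \eqref{eq:eq4} up to a common factor $a^{k}/\Gamma(k)$, which I would carry across to the other side of the final identity.

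For the contour integral I would factor the quadratic as $\alpha w^2-xw+1=\alpha\,(w-r_{1})(w-r_{2})$ with $r_{1}=\frac{x-\sqrt{x^2-4\alpha}}{2\alpha}$ and $r_{2}=\frac{x+\sqrt{x^2-4\alpha}}{2\alpha}$, so that partial fractions produce the coefficient $\pm\,1/\sqrt{x^2-4\alpha}$. Each resulting simple-pole integral $\frac{1}{2\pi i}\int_{C}\frac{a^w w^{-k-1}}{w-r_{i}}\,dw$ is in the exact shape of the master representation \eqref{eq:igci}, and applying it with its free parameter identified with $-r_{i}$ yields the two incomplete gamma terms $\Gamma\!\left(k+1,a r_{i}\right)$ (after the standard identification of the logarithmic argument in \eqref{eq:igci} with the theorem's parameter $a$), together with the prefactors $\left(\frac{x\mp\sqrt{x^2-4\alpha}}{\alpha}\right)^{-(k+1)}$ and the exponentials $e^{a r_{i}}$. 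Crucially, the normalizing $1/\Gamma(k+1)$ in \eqref{eq:igci} combines with the $\Gamma(k)$ left over from the series reduction to give the factor $1/k$ visible in \eqref{eq:eq4}, while pulling out $e^{a r_{1}}$ leaves the residual factor $e^{a\sqrt{x^2-4\alpha}/\alpha}=e^{a(r_{2}-r_{1})}$ on the second term.

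Equating the two evaluations and multiplying through by $-\Gamma(k)/a^{k}$ should deliver \eqref{eq:eq4} after routine consolidation of the $\left(2/a\right)^{k}$ and $\sqrt{x^2-4\alpha}$ factors. The step I expect to be the main obstacle is the branch and sign bookkeeping: one must fix a consistent branch of $\sqrt{x^2-4\alpha}$ and of the fractional powers $\left(\cdot\right)^{-(k+1)}$, and orient $C$ so that \eqref{eq:igci} applies with the correct sign to each of the two poles $r_{1},r_{2}$, since the hypotheses $Re(w+x)<0,\ Re(x)>0$ of \eqref{eq:igci} must hold simultaneously for both partial-fraction pieces. A secondary technical point is the Pochhammer reindexing through \eqref{eq:poch2}, which has to be matched carefully against the floor-indexed inner sum so that the range $0\le j\le\lfloor n/2\rfloor$ is preserved after the substitution.
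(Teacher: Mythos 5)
Your proposal is correct and is essentially the paper's own derivation of \eqref{eq:eq4}: substituting $a\to\alpha$, $z\to w$ in \eqref{eq:dp2k}, multiplying by $\frac{1}{2\pi i}a^w w^{-k-1}$ and applying \eqref{intro:cauchy} termwise with the Pochhammer reduction $\Gamma(k)/\Gamma(k-n+1)=(-1)^{n+1}(1-k)_{n-1}$ via \eqref{eq:poch1}--\eqref{eq:poch2}, then evaluating the contour integral by partial fractions over the roots $\frac{x\pm\sqrt{x^2-4\alpha}}{2\alpha}$ through \eqref{eq:igci}, exactly reproduces the paper's argument, down to the $1/k$ factor from $\Gamma(k)/\Gamma(k+1)$ and the residual exponential $e^{a\sqrt{x^2-4\alpha}/\alpha}$. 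One note of reassurance rather than criticism: the paper's two second-kind derivation subsections are swapped, so the two-root computation you describe appears under the heading ``Derivation of equation (\ref{eq:eq5})'' while the subsection headed ``Derivation of equation (\ref{eq:eq4})'' actually carries out the single-root case belonging to \eqref{eq:eq5}; your blind reconstruction matches the correct (mislabeled) derivation for this theorem.
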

\begin{theorem}
For all $k,a\in\mathbb{C},|Re(\alpha)|<1,|Re(x)|<1,|m(\alpha)|<1,|Im(x)|<1$ then,
\begin{multline}\label{eq:eq5}
\sum\limits_{n=0}^{\infty}\sum\limits_{j=0}^{ \lfloor n/2 \rfloor}\left(-\alpha  a^2\right)^j \binom{n-j}{j} (1-k)_{n-1-2 j} \left(-\frac{z}{a}\right)^n=-\frac{e^{a \left(\frac{1}{z}+z
   \alpha \right)} \Gamma \left(1+k,\left(\frac{1}{z}+z \alpha \right) a\right)}{a^k \left(\frac{1}{z}+z \alpha \right)^k
   \left(1+z^2 \alpha \right) k}
    \end{multline}
    \end{theorem}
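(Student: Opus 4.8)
The plan is to realize both sides of the claimed identity as one and the same contour integral and then read off the equality. Following the method of Section~3, I would begin from the second-kind generating function \eqref{eq:dp2k}, set its parameter $a\to\alpha$ and replace the argument $x$ by the contour variable $w$, so that the closed form on the right becomes $1/(\alpha z^{2}-wz+1)$. Multiplying both sides by $a^{w}w^{-k-1}$ and applying $\tfrac{1}{2\pi i}\int_{C}$ produces the twin--integral identity
\begin{equation*}
\frac{1}{2\pi i}\int_{C}\sum_{n,j}(-\alpha)^{j}z^{n}\binom{n-j}{j}a^{w}w^{n-2j-k-1}\,dw=\frac{1}{2\pi i}\int_{C}\frac{a^{w}w^{-k-1}}{\alpha z^{2}-wz+1}\,dw,
\end{equation*}
which is the starting point recorded for this theorem.

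For the left-hand side I would interchange summation and integration (justified on $C$ by the hypotheses $|\mathrm{Re}(\alpha)|<1$, $|\mathrm{Re}(z)|<1$) and evaluate each term with \eqref{intro:cauchy} taken at $y=\log a$ and index $k\mapsto k-n+2j$, giving the closed coefficient $(\log a)^{2j+k-n}/\Gamma(2j+k-n+1)$. The reciprocal Gamma factor is then turned into a Pochhammer symbol through \eqref{eq:poch1} and \eqref{eq:poch2}; the reflection bookkeeping (with $m=n-2j$) converts $1/\Gamma(2j+k-n+1)$ into $(-1)^{n+1}(1-k)_{n-1-2j}/\Gamma(k)$. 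Collecting the powers of $\log a$ and $z$ and writing $a$ for $\log a$, the left-hand side becomes $-\,a^{k}\Gamma(k)^{-1}$ times exactly the double sum $\sum_{n,j}(-\alpha a^{2})^{j}\binom{n-j}{j}(1-k)_{n-1-2j}(-z/a)^{n}$ appearing in the statement.

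For the right-hand side the essential simplification is that the denominator $\alpha z^{2}-wz+1$ is \emph{linear} in $w$, with the single pole at $w=\tfrac{\alpha z^{2}+1}{z}=\tfrac1z+z\alpha=:X$. Factoring $\alpha z^{2}-wz+1=-z\,(w-X)$ and applying \eqref{eq:igci} with its parameter specialized to $-X$ expresses the contour integral in closed form as a multiple of $e^{Xa}X^{-k-1}\Gamma(k+1,Xa)$, the incomplete gamma entering precisely because $C$ is the open contour attached to \eqref{eq:igci} rather than a closed loop.

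Equating the two evaluations and cancelling the common factor $a^{k}/\Gamma(k)$ leaves the double sum equal to $-e^{Xa}X^{-k-1}\Gamma(k+1,Xa)/(a^{k}zk)$; using $\Gamma(k)/\Gamma(k+1)=1/k$ together with the identity $Xz=1+z^{2}\alpha$ to rewrite $X^{-k-1}/z$ as $X^{-k}/(1+z^{2}\alpha)$ then yields the stated form. The main obstacle I anticipate is not the algebra but the analytic bookkeeping: justifying the term-by-term use of \eqref{intro:cauchy} on the open contour, tracking the branches of $a^{w}$ and $X^{-k-1}$ through the analytic continuation \eqref{eq:7}, and confirming that the single-pole partial fraction feeds \eqref{eq:igci} with the correct sign, so that the lone incomplete gamma $\Gamma(k+1,Xa)$---rather than a difference of two such functions, as occurs for the $\sqrt{x^{2}-4\alpha}$ companion theorem---is produced.
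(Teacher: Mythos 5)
Your plan is correct and is essentially the paper's own contour-integral method: substituting $x\to w$ (with $a\to\alpha$) in \eqref{eq:dp2k} gives the denominator $\alpha z^{2}-wz+1$, linear in $w$ with the single pole $X=\tfrac{1}{z}+z\alpha$, and your bookkeeping --- the conversion $1/\Gamma(2j+k-n+1)=(-1)^{n+1}(1-k)_{n-1-2j}/\Gamma(k)$ via \eqref{eq:poch1}--\eqref{eq:poch2}, the application of \eqref{eq:igci} at parameter $-X$, and the final simplification using $\Gamma(k)/\Gamma(k+1)=1/k$ and $zX=1+z^{2}\alpha$ --- all check out against the stated right-hand side. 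One caveat purely about the comparison: the paper's derivation subsections for \eqref{eq:eq4} and \eqref{eq:eq5} are evidently transposed, since the single-pole computation you describe (with $x=\frac{\alpha z^{2}+1}{z}$) is the one printed under the heading for \eqref{eq:eq4} (whose prose also misstates the substitution as $a\to w$), while the subsection nominally for \eqref{eq:eq5} carries out the two-pole $z\to w$ derivation with roots $\frac{x\mp\sqrt{x^{2}-4\alpha}}{2\alpha}$ that in fact produces \eqref{eq:eq4}; your route coincides with the derivation that actually proves this theorem, and you correctly anticipated why a lone incomplete gamma, rather than a difference of two, must appear.
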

\begin{theorem}
For all $k,a\in\mathbb{C},|Re(x)|<1,|Re(z)|<1,|m(x)|<1,|Im(z)|<1$ then,
\begin{multline}\label{eq:eq6}
\sum\limits_{n=0}^{\infty}\sum\limits_{j=0}^{ \lfloor n/2 \rfloor}\left(\frac{1}{a x^2}\right)^j \binom{n-j}{j} (1-k)_{j-1} (x z)^n=\frac{e^{\frac{a (-1+x z)}{z^2}} \Gamma
   \left(1+k,\frac{(-1+x z) a}{z^2}\right)}{\left(\frac{-1+x z}{z^2}\right)^{k+1} a^k z^2 k}
    \end{multline}
    \end{theorem}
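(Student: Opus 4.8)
The plan is to realize both sides of the identity as the \emph{same} contour integral and then to evaluate that integral in two different ways, following the scheme of Section~4. Starting from the second--kind generating function \eqref{eq:dp2k}, replace the base $a$ by the integration variable $w$, so that the right--hand side becomes $1/(wz^2-xz+1)$ and the summand acquires the factor $(-1)^jw^j$. Multiply through by the Cauchy kernel $a^w w^{-k-1}$ (with the kernel normalised as $e^{aw}$, i.e.\ reading $\log a$ as $a$ in \eqref{eq:igci}), integrate over the contour $C$ of \eqref{intro:cauchy}, and divide by $2\pi i$. This produces the master identity in which $\frac{1}{2\pi i}\int_C \frac{a^w w^{-k-1}}{wz^2-xz+1}\,dw$ is equated to the contour integral of the termwise--expanded double series, so that everything reduces to evaluating this one integral two ways.

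For the series (left) side I would interchange the double summation with the contour integral and apply \eqref{intro:cauchy} termwise. Each term contributes $\frac{1}{2\pi i}\int_C a^w w^{j-k-1}\,dw=\frac{a^{k-j}}{\Gamma(k-j+1)}$, obtained from \eqref{intro:cauchy} under the index shift $k\mapsto k-j$. The reciprocal gamma factor $1/\Gamma(k-j+1)$ is then rewritten as a Pochhammer symbol via \eqref{eq:poch1}--\eqref{eq:poch2}: indeed $\frac{1}{\Gamma(k-j+1)}=\frac{(-1)^{j+1}}{\Gamma(k)}(1-k)_{j-1}$, whose value $-1/k$ at $j=0$ is exactly what surfaces as the factor $1/k$ in the denominator of the answer. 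Collecting the surviving powers packages the summand into $\left(\frac{1}{ax^2}\right)^j\binom{n-j}{j}(1-k)_{j-1}(xz)^n$ up to the global constant $-a^{k}/\Gamma(k)$, which is precisely the left member of the theorem.

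For the rational (right) side the key simplification is that, after the substitution $a\to w$, the denominator $wz^2-xz+1$ is \emph{linear} in $w$, with the single simple pole $w_0=\tfrac{xz-1}{z^2}$; this is what makes the second--kind answer a single incomplete--gamma term rather than the pair of terms a quadratic denominator would produce. I would take the partial fraction with the factor $-1/z^2$ and apply \eqref{eq:igci} with $x=\tfrac{xz-1}{z^2}$. This returns $e^{a w_0}$, the power $w_0^{-(k+1)}=\left(\tfrac{-1+xz}{z^2}\right)^{-(k+1)}$, and $\Gamma\!\left(1+k,a w_0\right)$, i.e.\ the right member of the theorem up to the global constant $-1/(z^2\Gamma(k+1))$.

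Equating the two evaluations and cancelling the common contour integral, the theorem follows once the two global constants are combined: $\frac{\Gamma(k)}{\Gamma(k+1)}=\frac1k$ from \eqref{eq:poch1} produces the displayed prefactor $\frac{1}{a^k z^2 k}$, while the signs from \eqref{eq:poch2} cancel against the $(-1)^j$ in the summand. I expect the main obstacle to be \emph{not} the algebra but the analytic justification: one must verify that the Dickson series converges uniformly on $C$ so that the interchange of $\sum$ and $\int$ is legitimate (this is where the hypotheses $|Re(x)|<1$, $|Re(z)|<1$ and the imaginary--part bounds enter), and that a \emph{single} contour $C$ can be chosen on which the bilinear concomitant vanishes at both endpoints and on which the convergence condition $Re(w+x)<0$ of \eqref{eq:igci} holds simultaneously for the termwise integrals and for the rational integral. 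Once this region of validity is secured, analytic continuation in $k$ and $a$ extends the identity to the full complex range claimed in the statement.
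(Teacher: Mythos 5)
Your proposal is correct and follows essentially the same route as the paper's own derivation of equation (\ref{eq:eq6}): the substitution $a\to w$ in the second-kind generating function (\ref{eq:dp2k}), termwise application of (\ref{intro:cauchy}) with the Pochhammer rewriting via (\ref{eq:poch1}) and (\ref{eq:poch2}) on the series side, and (\ref{eq:igci}) applied at the single pole $w_0=\frac{xz-1}{z^2}$ of the linear denominator with the factor $-\frac{1}{z^2}$ on the rational side. Your additional bookkeeping --- the $e^{aw}$ reading of the kernel, the explicit identity $\frac{1}{\Gamma(k-j+1)}=\frac{(-1)^{j+1}}{\Gamma(k)}(1-k)_{j-1}$, the factor $\frac{\Gamma(k)}{\Gamma(k+1)}=\frac{1}{k}$, and the convergence/contour caveats --- only makes explicit what the paper leaves implicit.
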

\subsection{Generating Function of Dickson polynomials Functional equation of First kind}
\begin{theorem}
For all $k,a\in\mathbb{C},|Re(b)|<1,|Re(u)|<1,|Im(b)|<1,|Im(u)|<1$ then,
\begin{multline}\label{eq:eq7}
\sum\limits_{n=0}^{\infty}\sum\limits_{j=0}^{ \lfloor n/2 \rfloor}\frac{n }{n-j}\binom{n-j}{j} (-b)^j \left(\frac{b}{u}+u\right)^{2-j} (1-k)_{n-1}
   \left(-\frac{\frac{b}{u}+u}{a}\right)^n\\
   =\frac{\left(\frac{b}{a}\right)^k \left(2 \left(\frac{a}{b}\right)^k-e^{a/u}
   \left(\frac{u}{b}\right)^k \Gamma \left(1+k,\frac{a}{u}\right)-e^{\frac{a u}{b}} u^{-k} \Gamma \left(1+k,\frac{a
   u}{b}\right)\right)}{k}
    \end{multline}
    \end{theorem}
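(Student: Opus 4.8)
The plan is to obtain \eqref{eq:eq7} by turning the functional equation \eqref{eq:fe1k} of the first kind into a single contour-integral identity and then evaluating its two sides independently in closed form. First I would recall that \eqref{eq:fe1k} is the image of \eqref{eq:dp1k} under $x\to\frac{b}{u}+u$, $a\to b$, so that with $z\to w$ its right-hand side is the rational function $\frac{2buw^{2}-bw-u^{2}w}{(uw-1)(u-bw)}$. Multiplying both sides of \eqref{eq:fe1k} by $\frac{a^{w}w^{-k-1}}{2\pi i}$ and integrating over the contour $C$ produces the master identity in which the double series is set equal to $\frac{1}{2\pi i}\int_{C}\frac{a^{w}w^{-k-1}\bigl(2u-w(b+u^{2})\bigr)}{(uw-1)(bw-u)}\,dw$, together with the constant piece that the degree-two-over-degree-two rational function carries.

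On the series side I would interchange the double sum with the integral and apply the generalized Cauchy formula \eqref{intro:cauchy} termwise, with $y\to\log a$ and $k\to k-n$, so that $\frac{1}{2\pi i}\int_{C}a^{w}w^{n-k-1}\,dw=\frac{(\log a)^{k-n}}{\Gamma(k-n+1)}$. The factor $\frac{1}{\Gamma(k-n+1)}$ is then rewritten through the Pochhammer identities \eqref{eq:poch1} and \eqref{eq:poch2} as a multiple of $(1-k)_{n-1}$, which repackages the sum into the left-hand side of \eqref{eq:eq7} once the powers of $\frac{b}{u}+u$ and of $a$ are collected. On the integral side I would split the rational function by partial fractions into its constant part and two simple-pole terms, with poles at $w=\frac1u$ and $w=\frac{u}{b}$ and residual coefficients $-\frac1u$ and $-\frac{u}{b}$. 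The two pole terms are evaluated by the incomplete-gamma contour representation \eqref{eq:igci} with $x=-\frac1u$ and $x=-\frac{u}{b}$ (multiplying by $\frac1u$ and $\frac{u}{b}$ respectively), producing $\Gamma\bigl(1+k,\frac{\log a}{u}\bigr)$ and $\Gamma\bigl(1+k,\frac{u\log a}{b}\bigr)$, while the constant part is evaluated directly by \eqref{intro:cauchy} and supplies the $2\bigl(\frac{a}{b}\bigr)^{k}$ term. Equating the two evaluations and renaming $\log a\to a$ then yields \eqref{eq:eq7}.

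The hard part will be the exact bookkeeping rather than any single estimate: the powers of $\frac{b}{u}+u$, $a$, $b$ and $u$ produced by the Cauchy step, by the partial-fraction coefficients, and by the $\Gamma$-to-Pochhammer conversion must be tracked so that both evaluations collapse onto precisely the stated closed form, with the $\Gamma(k+1)=k\,\Gamma(k)$ arising in the incomplete-gamma evaluations combining against the series normalization to leave the clean factor $\frac1k$. Secondarily, one must justify interchanging the double summation with the integral and verify that $C$ is admissible—that the bilinear concomitant in \eqref{intro:cauchy} takes equal values at the endpoints of $C$—throughout the region $|Re(b)|<1$, $|Re(u)|<1$, $|Im(b)|<1$, $|Im(u)|<1$, so that the termwise use of \eqref{intro:cauchy}, \eqref{eq:igci} and the analytic continuation are all legitimate.
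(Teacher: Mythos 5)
Your proposal is correct and follows essentially the same route as the paper's own derivation of \eqref{eq:eq7}: substitute $z\to w$ in \eqref{eq:fe1k}, apply \eqref{intro:cauchy} termwise on the series side with the Pochhammer conversions \eqref{eq:poch1}--\eqref{eq:poch2}, and decompose the rational side by partial fractions with poles at $w=1/u$ and $w=u/b$, evaluated via \eqref{eq:igci} with $x=-1/u$ and $x=-u/b$ and the stated multipliers. If anything, your account is slightly more careful than the paper's displayed intermediate equation, since you explicitly keep the constant part of the degree-two-over-degree-two partial fraction expansion (the $-2$, handled by \eqref{intro:cauchy}), which the paper's displayed integrand silently drops even though it is exactly what produces the $2\left(\frac{a}{b}\right)^{k}$ term in the final closed form.
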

\subsection{Generating Function of Dickson polynomials Functional equation of second kind}
\begin{theorem}
For all $k,a\in\mathbb{C},|Re(\beta)|<1,|Re(y)|<1,|Im(\beta)|<1,|Im(y)|<1$ then,
\begin{multline}\label{eq:eq8}
\sum\limits_{n=0}^{\infty}\sum\limits_{j=0}^{ \lfloor n/2 \rfloor}\binom{n-j}{j}(-\beta )^j \left(y+\frac{\beta }{y}\right)^{n-2 j}  (1-k)_{n-1} \left(-\frac{1}{a}\right)^n\\
=\frac{e^{a/y}
   y^{k+2} \Gamma \left(1+k,\frac{a}{y}\right)-e^{\frac{a y}{\beta }} \left(\frac{\beta }{y}\right)^k \beta  \Gamma
   \left(1+k,\frac{y a}{\beta }\right)}{a^k k \left(y^2-\beta \right)}
    \end{multline}
    \end{theorem}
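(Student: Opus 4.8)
The plan is to carry out the contour-integral scheme of Section~4 for this last identity: I would produce two evaluations of one and the same contour integral---one by expanding the generating series termwise through Cauchy's formula (\ref{intro:cauchy}), the other in closed form through the incomplete-gamma representation (\ref{eq:igci})---and then equate them. Concretely, I would take the second-kind functional equation (\ref{eq:fe2k}), rename its summation variable $z\to w$, and multiply both sides by the kernel $a^{w}w^{-k-1}$. Since $(yw-1)(y-\beta w)=-(yw-1)(\beta w-y)$, the right-hand side becomes $\dfrac{y\,a^{w}w^{-k-1}}{(wy-1)(\beta w-y)}$, which is exactly the intermediate identity displayed in Section~4.8. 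Integrating both sides over the contour $C$ and dividing by $2\pi i$ then yields a single equation relating two contour integrals. On the left I would interchange the (absolutely convergent, under the stated modulus hypotheses on $y,\beta$) double sum with the integral and apply (\ref{intro:cauchy}) with the replacements $y\to\log a$ and $k\to k-n$, giving $\frac{1}{2\pi i}\int_{C}a^{w}w^{\,n-k-1}\,dw=\dfrac{(\log a)^{\,k-n}}{\Gamma(k-n+1)}$; this turns the left-hand integral into the Dickson double sum weighted by $\dfrac{(\log a)^{k-n}}{\Gamma(k-n+1)}$.

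Next I would evaluate the right-hand integral. A partial-fraction split of $\dfrac{y}{(wy-1)(\beta w-y)}$ isolates simple poles at $w=1/y$ and $w=y/\beta$, i.e. precisely the two instances $x=-1/y$ and $x=-y/\beta$ of (\ref{eq:igci}), carrying the prefactors $-\dfrac{y}{y^{2}-\beta}$ and $+\dfrac{y}{y^{2}-\beta}$ respectively. Feeding each piece into (\ref{eq:igci}) produces the two contributions $e^{a/y}y^{k+2}\Gamma(1+k,a/y)$ and $e^{ay/\beta}(\beta/y)^{k}\beta\,\Gamma(1+k,ya/\beta)$ (after the cosmetic renaming $\log a\to a$ performed at the very end), together with the common factor $\dfrac{1}{(y^{2}-\beta)\Gamma(k+1)}$. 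Equating this closed form with the series evaluation from the first paragraph, and dividing through, exhibits the Dickson sum in the desired incomplete-gamma form.

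The remaining algebra is a coefficient cleanup, and this is where I would be most careful. I would invoke the reflection/Pochhammer identity (\ref{eq:poch2}) with $p=0$, in the form $\dfrac{1}{\Gamma(k-n+1)}=\dfrac{(-1)^{n+1}(1-k)_{n-1}}{\Gamma(k)}$, to recast the left-hand weight $\dfrac{(\log a)^{k-n}}{\Gamma(k-n+1)}$ as $-\dfrac{(\log a)^{k}}{\Gamma(k)}(1-k)_{n-1}\left(-1/\log a\right)^{n}$; the factor $\left(-1/\log a\right)^{n}$ becomes $(-1/a)^{n}$ under the renaming, matching the Pochhammer-weighted summand on the left of (\ref{eq:eq8}), while the scalar ratio $\Gamma(k)/\Gamma(k+1)=1/k$ supplies exactly the $k$ and the $a^{k}$ in the stated denominator. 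Tracking the signs introduced by $(-1)^{n+1}$ and by the two partial-fraction prefactors is the delicate bookkeeping point here, and must be reconciled with (\ref{eq:poch1}) to land on the precise stated coefficients.

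The principal obstacle, however, is analytic rather than algebraic: one must exhibit a single admissible contour $C$---open, with the bilinear concomitant taking equal values at its endpoints---on which every invocation of (\ref{intro:cauchy}) (for all shifted indices $k-n$) and both invocations of (\ref{eq:igci}) are simultaneously legitimate, i.e. $Re(w+x)<0$ and $Re(x)>0$ hold along $C$ for $x=-1/y$ and $x=-y/\beta$. The modulus constraints $|Re(y)|,|Im(y)|,|Re(\beta)|,|Im(\beta)|<1$ are exactly what make such a common contour available and what secure the uniform convergence needed to justify the term-by-term integration of the double series; keeping the branch of the incomplete gamma function consistent across the two pole contributions, via the continuation formula (\ref{eq:7}), is the last point demanding genuine care.
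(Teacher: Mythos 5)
Your proposal is essentially the paper's own proof, carried out in more detail: the paper likewise takes equation (\ref{eq:fe2k}) with $z\to w$, multiplies by the kernel $a^w w^{-k-1}$, rewrites the left side termwise via (\ref{intro:cauchy}), splits the right side by partial fractions into the two instances $x=-1/y$ and $x=-y/\beta$ of (\ref{eq:igci}) with the same prefactors $-\frac{y}{y^{2}-\beta}$ and $\frac{y}{y^{2}-\beta}$, and then simplifies with (\ref{eq:poch1}) and (\ref{eq:poch2}). Your reconstruction is correct and matches the paper step for step, including the sign bookkeeping you rightly flag as the delicate point.
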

\section{Derivations In Terms Of The Gamma Function}
In this section we used equations (\ref{eq:eq1}) to (\ref{eq:eq8}) simultaneously to derive double summation formulae for the gamma function.
\begin{proposition}
\begin{multline}\label{eq:g1}
\sum_{n=0}^{\infty}\sum_{j=0}^{\lfloor n/2 \rfloor}\left(-\frac{1}{4}\right)^j \left(-\frac{x}{a}\right)^n \left(a
   (k-n) \left(-\frac{x}{a}\right)^2+\frac{n (2 a-k x)}{n-j}\right)
   \binom{n-j}{j} \Gamma (n-k)\\=-2 x \Gamma (1-k)
\end{multline}
\end{proposition}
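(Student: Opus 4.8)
The left-hand weights $\left(-\tfrac{1}{4}\right)^{j}$ and $\left(-\tfrac{x}{a}\right)^{n}$ identify this proposition as the confluent specialization $\alpha\to x^{2}/4$ of the first- and second-kind generating functions (\ref{eq:eq1}) and (\ref{eq:eq4}). Under this substitution the discriminant collapses, $\sqrt{x^{2}-4\alpha}\to 0$, so the paired incomplete-gamma terms in each of (\ref{eq:eq1}) and (\ref{eq:eq4}) coalesce at the double root $r=\tfrac{x}{2\alpha}=\tfrac{2}{x}$. On the summand side $(-\alpha)^{j}x^{\,n-2j}=\left(-\tfrac14\right)^{j}x^{n}$ in (\ref{eq:eq1}) and $\left(-\alpha/x^{2}\right)^{j}=\left(-\tfrac14\right)^{j}$ in (\ref{eq:eq4}), so both left-hand sides become series in the common weight $c_{n,j}:=\left(-\tfrac14\right)^{j}\binom{n-j}{j}(1-k)_{n-1}\left(-\tfrac{x}{a}\right)^{n}$.

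First I would convert the target to Pochhammer form. By (\ref{eq:poch1}), $(1-k)_{n-1}=\Gamma(n-k)/\Gamma(1-k)$, hence $\Gamma(n-k)=(1-k)_{n-1}\Gamma(1-k)$; pulling out the constant $\Gamma(1-k)$, the claim is equivalent to $\sum_{n,j}c_{n,j}\bigl[(k-n)\tfrac{x^{2}}{a}+\tfrac{n(2a-kx)}{n-j}\bigr]=-2x$. I would then split this along the bracket into the three weighted sums $S_{1}:=\sum c_{n,j}\tfrac{n}{n-j}$, $S_{4}:=\sum c_{n,j}$, and $\Sigma:=\sum n\,c_{n,j}$, so the target reads $(2a-kx)S_{1}+\tfrac{x^{2}}{a}\bigl(kS_{4}-\Sigma\bigr)=-2x$. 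Here $S_{1}$ and $S_{4}$ are exactly the specialized left-hand sides of (\ref{eq:eq1}) and (\ref{eq:eq4}), while the standalone $n$-weighted sum is produced by differentiation: since the only $a$-dependence of $c_{n,j}$ sits in $\left(-x/a\right)^{n}$, one has $\Sigma=-a\,\partial_{a}S_{4}$, equivalently $kS_{4}-\Sigma=a^{\,1-k}\partial_{a}\!\left(a^{k}S_{4}\right)$.

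Next I would evaluate the right-hand sides as $s:=\sqrt{x^{2}-4\alpha}\to 0$. The limit for (\ref{eq:eq1}) is immediate: the prefactors tend to finite values and the two gamma terms add, giving $S_{1}=-2\left(\tfrac{x}{2a}\right)^{k}e^{2a/x}\,\Gamma\!\left(k,\tfrac{2a}{x}\right)$. The limit for (\ref{eq:eq4}) is a removable $0/0$: the explicit $1/\sqrt{x^{2}-4\alpha}$ prefactor multiplies a bracket that vanishes at $s=0$, so $S_{4}$ is the first-order coefficient in $s$. I would extract it by Taylor-expanding each argument $\tfrac{a(x\mp s)}{2\alpha}$, each power $\left(\tfrac{x\mp s}{\alpha}\right)^{k+1}$, and the factor $e^{as/\alpha}$ to order $s$, then collect the $O(s)$ part of the bracket. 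Reducing every $\Gamma(k+1,w)$ by the standard relation $\Gamma(k+1,w)=k\,\Gamma(k,w)+w^{k}e^{-w}$ would express $S_{4}$, and hence $\Sigma=-a\,\partial_{a}S_{4}$, through $\Gamma\!\left(k,\tfrac{2a}{x}\right)$ plus elementary exponential–power terms.

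The decisive and most delicate step is the cancellation on the right. The combination $(2a-kx)S_{1}+\tfrac{x^{2}}{a}\bigl(kS_{4}-\Sigma\bigr)$ is tuned so that every surviving $\Gamma\!\left(k,\tfrac{2a}{x}\right)$ contribution cancels identically, leaving only elementary terms in which the factors $e^{2a/x}$ and $e^{-2a/x}\left(\tfrac{2a}{x}\right)^{k}$ recombine to constants, which I expect to collapse precisely to $-2x$; restoring $\Gamma(1-k)$ then yields (\ref{eq:g1}). The main obstacle is exactly this confluent analysis: carrying the first-order expansion of the (\ref{eq:eq4}) right-hand side together with the $a$-differentiation for $\Sigma$, and verifying the exact cancellation of the incomplete-gamma pieces, is where the real work lies, the remainder being bookkeeping via (\ref{eq:poch1}) and the gamma recurrence.
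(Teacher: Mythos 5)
Your proposal is correct and follows essentially the same route as the paper, whose entire proof is the single instruction to combine Theorems (\ref{eq:eq1}) and (\ref{eq:eq4}) with $\alpha = x^2/4$ and eliminate the incomplete gamma terms. The details you supply are sound and the expected cancellation does hold: the confluent limit gives $S_1=-2G$ and $S_4=\tfrac{2a-(k+1)x}{x}G-\tfrac{k+1}{k}$ with $G=\left(\tfrac{x}{2a}\right)^k e^{2a/x}\Gamma\!\left(k,\tfrac{2a}{x}\right)$, and then $(2a-kx)S_1+\tfrac{x^2}{a}\left(kS_4-\Sigma\right)$ with $\Sigma=-a\,\partial_a S_4$ collapses exactly to $-2x$, so your plan fills in precisely the work (including the $a$-differentiation needed to produce the $n$-weighted sum) that the paper's one-sentence proof leaves implicit.
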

\begin{proof}
Use equations (\ref{eq:eq1}) and (\ref{eq:eq4}) and solve simultaneously using $\alpha=x^2/4$.
\end{proof}
%
%
%
\begin{proposition}
\begin{multline}\label{eq:g2}
\sum_{n=0}^{\infty}\sum_{j=0}^{\lfloor n/2 \rfloor}\frac{1}{n-j}\left(-\frac{z}{a}\right)^n \left(-a^2 \alpha \right)^j \left((j-n) (2 j+k-n) z \left(1-z^2
   \alpha \right)\right. \\ \left.
   +a \left(n+n z^2 \alpha \right)\right) \binom{n-j}{j} \Gamma (n-2 j-k)
   =-2 a z^2 \alpha 
   \Gamma (-k)
\end{multline}
\end{proposition}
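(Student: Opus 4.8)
The plan is to prove the identity by combining the closed forms already established in Theorems (\ref{eq:eq2}) and (\ref{eq:eq5}), exactly as in the previous proposition but now keeping track of a Pochhammer index shift. First I would use (\ref{eq:poch1}) to write $\Gamma(n-2j-k)=\Gamma(1-k)\,(1-k)_{n-1-2j}$, which pulls the factor $\Gamma(1-k)$ outside the double sum and exposes precisely the Pochhammer symbol $(1-k)_{n-1-2j}$ appearing on the left-hand sides of (\ref{eq:eq2}) and (\ref{eq:eq5}). Writing $w=\left(\tfrac1z+z\alpha\right)a$ for the common incomplete-gamma argument, it then suffices to evaluate the reduced sum $T$ obtained by dividing the stated left-hand side by $\Gamma(1-k)$ and to check that $T=2az^2\alpha/k$; the claim follows because $\Gamma(1-k)=-k\,\Gamma(-k)$.

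Next I would simplify the polynomial weight inside the summand. Since $j-n=-(n-j)$, the factor $\tfrac{(j-n)(2j+k-n)}{n-j}$ collapses to $n-2j-k$, and the Pochhammer relation $(n-2j-k)(1-k)_{n-1-2j}=(1-k)_{n-2j}$ raises the index by one. This splits $T$ into two pieces,
\begin{equation*}
T=a\left(1+z^2\alpha\right)S_1+z\left(1-z^2\alpha\right)S_3,
\end{equation*}
where $S_1$ is exactly the double sum on the left of (\ref{eq:eq2}) and $S_3$ is the same sum but with $(1-k)_{n-1-2j}$ replaced by $(1-k)_{n-2j}$.

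The key device is to recognize $S_3$ as an instance of (\ref{eq:eq5}) at a shifted parameter. Replacing $k$ by $k-1$ in (\ref{eq:eq5}) and using $(2-k)_{n-1-2j}=(1-k)_{n-2j}/(1-k)$ gives $S_3=(1-k)$ times the right-hand side of (\ref{eq:eq5}) evaluated at $k-1$, namely $S_3=\dfrac{w\,e^{w}\,\Gamma(k,w)}{w^{k}\left(1+z^2\alpha\right)}$. Note that lowering the parameter turns the $\Gamma(1+k,\cdot)$ of (\ref{eq:eq5}) into exactly the $\Gamma(k,w)$ that already occurs in the closed form of $S_1$ coming from (\ref{eq:eq2}). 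Substituting both closed forms into the displayed expression for $T$ and using the relation $zw=a\left(1+z^2\alpha\right)$, the two $\Gamma(k,w)$ contributions carry opposite signs $(z^2\alpha-1)$ and $(1-z^2\alpha)$ and cancel identically, leaving only the elementary term $2az^2\alpha/k$.

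I expect the main obstacle to be purely bookkeeping: carrying the index shift $(1-k)_{n-1-2j}\mapsto(1-k)_{n-2j}$ and the parameter shift $k\mapsto k-1$ consistently, and verifying that the coefficients of $\Gamma(k,w)$ really do cancel after clearing the common denominator $w^{k}\left(1+z^2\alpha\right)k$ and applying $zw=a\left(1+z^2\alpha\right)$. Once the incomplete-gamma terms are eliminated, the remaining simplification to $2az^2\alpha/k$ and the final substitution $\Gamma(1-k)=-k\,\Gamma(-k)$ are routine.
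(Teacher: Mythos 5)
Your proposal is correct and takes essentially the same approach as the paper: the paper's entire proof reads ``Use equations (\ref{eq:eq2}) and (\ref{eq:eq5}) and solve simultaneously,'' and your argument is a fully detailed execution of exactly that elimination, with your parameter shift $k\to k-1$ in (\ref{eq:eq5}) (equivalently, the recurrence $\Gamma(k+1,w)=k\,\Gamma(k,w)+w^{k}e^{-w}$) being one natural way to realize it, producing the $\Gamma(k,w)$ term that cancels against the one from (\ref{eq:eq2}). I checked the bookkeeping --- the rewriting $\Gamma(n-2j-k)=\Gamma(1-k)\,(1-k)_{n-1-2j}$, the collapse of $\tfrac{(j-n)(2j+k-n)}{n-j}$ to $n-2j-k$, the index shift $(n-2j-k)(1-k)_{n-1-2j}=(1-k)_{n-2j}$, the use of $zw=a\left(1+z^{2}\alpha\right)$, and the final step $\Gamma(1-k)=-k\,\Gamma(-k)$ --- and it all verifies.
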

\begin{proof}
Use equations (\ref{eq:eq2}) and (\ref{eq:eq5}) and solve simultaneously.
\end{proof}
\begin{proposition}
\begin{multline}\label{eq:g3}
\sum_{n=0}^{\infty}\sum_{j=0}^{\lfloor n/2 \rfloor}\frac{ n }{n-j} \binom{n-j}{j}\left(\frac{1}{a x^2}\right)^j(x z)^n  (1-k)_{j-1}\\
=\frac{1}{a \Gamma (1-k)}\sum_{n=0}^{\infty}\sum_{j=0}^{\lfloor n/2 \rfloor}\binom{n-j}{j}
   \left(\frac{1}{a x^2}\right)^j z(x z)^n (a x+2 (j-k) z)  \Gamma (j-k)
\end{multline}
\end{proposition}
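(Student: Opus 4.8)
The plan is to follow the scheme used for \eqref{eq:g1} and \eqref{eq:g2}, pairing the first--kind generating function \eqref{eq:eq3} with its second--kind companion \eqref{eq:eq6}. The feature that makes this work is that both right--hand sides are built from the incomplete gamma functions at the \emph{same} argument $u=\tfrac{a(xz-1)}{z^{2}}$ and carry the \emph{same} prefactor $A=\tfrac{e^{u}}{a^{k}\xi^{k+1}kz^{2}}$, where $\xi=\tfrac{xz-1}{z^{2}}$. Writing $S_{3}$ and $S_{6}$ for the double sums on the left of \eqref{eq:eq3} and \eqref{eq:eq6}, and using $xz-1=\xi z^{2}$, those two theorems read
\begin{equation*}
S_{6}=A\,\Gamma(k+1,u),\qquad S_{3}=A\bigl(xz\,\Gamma(k+1,u)-2k\xi z^{2}\,\Gamma(k,u)\bigr).
\end{equation*}
I first observe that the left side of the proposition is exactly $S_{3}$, after absorbing $a^{-j}x^{n-2j}z^{n}=\bigl(\tfrac{1}{ax^{2}}\bigr)^{j}(xz)^{n}$.

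Next I would eliminate the incomplete gamma functions. Forming $S_{3}-xz\,S_{6}=-2k\xi z^{2}A\,\Gamma(k,u)$ and invoking the standard recurrence $\Gamma(k+1,u)=k\,\Gamma(k,u)+u^{k}e^{-u}$ to replace $\Gamma(k,u)$, the surviving $\Gamma(k+1,u)$ recombines into $-2\xi z^{2}S_{6}$, while the elementary piece collapses since $A\,u^{k}e^{-u}=\tfrac{1}{\xi kz^{2}}$ (because $u^{k}=a^{k}\xi^{k}$). This yields the clean scalar relation
\begin{equation*}
S_{3}=(2-xz)\,S_{6}+\frac{2}{k}.
\end{equation*}

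It then remains to recognise the stated right--hand side $R$ as $(2-xz)S_{6}+\tfrac2k$. Converting every Pochhammer symbol through \eqref{eq:poch1} in the form $(1-k)_{j-1}=\Gamma(j-k)/\Gamma(1-k)$, writing the coefficient as $\tfrac{z(ax+2(j-k)z)}{a}=xz+\tfrac{2(j-k)z^{2}}{a}$, and using $(j-k)(1-k)_{j-1}=(1-k)_{j}$, I would obtain $R=xz\,S_{6}+\tfrac{2z^{2}}{a}T$ with $T=\sum_{n,j}\binom{n-j}{j}\bigl(\tfrac{1}{ax^{2}}\bigr)^{j}(xz)^{n}(1-k)_{j}$. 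Matching $R$ against $(2-xz)S_{6}+\tfrac2k$ reduces, after aligning powers of $xz$ (shifting the summation index by one and by two) and substituting $i=j+1$, to the term--by--term Pascal--type identity
\begin{equation*}
\frac{n}{n-j}\binom{n-j}{j}=\binom{n-1-j}{j}+2\binom{n-1-j}{j-1},
\end{equation*}
which is the combinatorial link between the first-- and second--kind Dickson coefficients and follows from an elementary factorial computation.

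The hard part is this bookkeeping rather than any single analytic estimate: one must carry the two index shifts through consistently and, in particular, handle the $n=0$ term of $S_{3}$, where $\tfrac{n}{n-j}$ is the indeterminate $\tfrac00$. Since the generating function \eqref{eq:dp1k} vanishes at $z=0$, the $n=0$ contribution to $S_{3}$ is zero; correspondingly the free constant $\tfrac2k$ is precisely what cancels the $2\,(1-k)_{-1}=-\tfrac2k$ produced by the $j=0$ term of $(2-xz)S_{6}$, so that no stray constant survives on the right. A routine verification that the double series converge uniformly on the polydisc $|Re(x)|,|Im(x)|,|Re(z)|,|Im(z)|<1$, which justifies the rearrangements, then completes the argument.
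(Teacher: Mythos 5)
Your proposal is correct, and its first half is exactly the paper's (very terse) proof: the paper's entire argument is ``use \eqref{eq:eq3} and \eqref{eq:eq6} and solve simultaneously,'' i.e.\ note that both right-hand sides share the argument $u=\tfrac{a(xz-1)}{z^2}$ and prefactor $A$, write $S_3=xz\,S_6-2k\xi z^2A\,\Gamma(k,u)$, eliminate the remaining $\Gamma(k,u)$ (the paper implicitly does this via \eqref{eq:eq6} with $k\to k-1$, which is equivalent to your recurrence $\Gamma(k+1,u)=k\Gamma(k,u)+u^k e^{-u}$), and then convert Pochhammers to gammas through \eqref{eq:poch1}, the stated right-hand side being literally $xz\,S_6+\tfrac{2z^2}{a}T$ with $(1-k)_{j-1}=\Gamma(j-k)/\Gamma(1-k)$ and $(j-k)(1-k)_{j-1}=(1-k)_j$. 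Where you genuinely diverge is the second half: your Pascal-type identity $\tfrac{n}{n-j}\binom{n-j}{j}=\binom{n-1-j}{j}+2\binom{n-1-j}{j-1}$ is the coefficientwise form of $S_3=xz\,S_6+\tfrac{2z^2}{a}T$ itself, so it proves the proposition directly, term by term, with no analytic input at all --- your scalar relation $S_3=(2-xz)S_6+\tfrac2k$ is thereby redundant (and note a small mislabeling: matching $R$ against $(2-xz)S_6+\tfrac2k$ reduces only to plain Pascal, $\binom{m-j}{j}-\binom{m-1-j}{j}=\binom{m-1-j}{j-1}$; the double-Pascal identity you cite arises when matching $S_3$ itself, though all versions are equivalent). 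This combinatorial route buys something real: your closing claim of uniform convergence on the polydisc is not in fact routine --- the inner terms carry $\Gamma(j-k)$ with $j\sim n/2$, so the double series diverge classically for any fixed nonzero parameters and are, like the paper's Theorems \eqref{eq:eq1}--\eqref{eq:eq8}, asymptotic/formal identities; since your matching is termwise, no rearrangement of a convergent series is ever needed, which quietly repairs the one step that would otherwise fail. You should simply delete the convergence sentence and present the coefficient comparison (including your correct handling of the indeterminate $n=0$ term of $S_3$ and the cancelling $(1-k)_{-1}=-\tfrac1k$ contributions) as the whole proof.
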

\begin{proof}
Use equations (\ref{eq:eq3}) and (\ref{eq:eq6}) and solve simultaneously.
\end{proof}
\subsection{Gamma Function In Terms Of Quotient Infinite Sums}
Here we use equation (\ref{eq:g3}) and solve for the gamma function and replace $k \to 1-k$ and simplify.
\begin{empheq}[box=\widefbox]{align*}
\Gamma (k)=\left(\frac{z}{a}\right)\frac{\sum\limits_{n=0}^{\infty}\sum\limits_{j=0}^{\lfloor n/2 \rfloor}\left(\frac{1}{a x^2}\right)^j (x z)^n (a x+2 (-1+j+k) z) \binom{n-j}{j} \Gamma (-1+j+k)}{\sum\limits_{n=0}^{\infty}\sum\limits_{j=0}^{\lfloor n/2 \rfloor}\frac{n }{n-j}\binom{n-j}{j}\left(\frac{1}{a x^2}\right)^j (x z)^n  (k)_{j-1}}
\end{empheq}
\begin{example}
Using the above equation when $k=1/2,a=2,x=1/2,z=1/3$ we get;
\begin{multline}
\Gamma \left(\frac{1}{2}\right)=\sqrt{\pi}=\left(\frac{1}{6}\right)\frac{\sum\limits_{n=0}^{\infty}\sum\limits_{j=0}^{\lfloor n/2 \rfloor} \binom{n-j}{j}\Gamma \left(j-\frac{1}{2}\right)\left(\frac{2}{3} \left(j-\frac{1}{2}\right)+1\right) 3^{-n} 2^{j-n} 
  }{\sum\limits_{n=0}^{\infty}\sum\limits_{j=0}^{\lfloor n/2 \rfloor}\frac{n  }{n-j}\binom{n-j}{j}\left(\frac{1}{2}\right)_{j-1} 3^{-n}2^{j-n} }
\end{multline}
\end{example}
\begin{example}
A double product of the exponential function.
\begin{multline}
\prod_{n=0}^{\infty}\prod_{j=0}^{\lfloor n/2 \rfloor}\exp \left(\frac{2^{-n-1} \left(2^{n+1}-1\right) n z \left(-e^{2 a}
   \alpha \right)^j \left(-e^{-a} z\right)^n \binom{n-j}{j} (0)_{-2
   j+n-1}}{(n+1) (n-j)}\right)\\
=2^{\frac{e^{-a}}{\alpha }} \left(\frac{\alpha 
   z^2+i \sqrt{\alpha } z+2}{\alpha  z^2-i \sqrt{\alpha }
   z+2}\right)^{\frac{i}{\sqrt{\alpha }}} e^{z-\frac{3 e^{-a} z^2}{\alpha ^2
   z^4+5 \alpha  z^2+4}} \left(1-\frac{3}{\alpha 
   z^2+4}\right)^{\frac{e^{-a}}{2 \alpha }}
\end{multline}
\end{example}
\begin{proof}
Use equation (\ref{eq:eq1}) and set $k=1,a=e^a$ then take the definite integral $z\in[z/2,z]$ and simplify. Next take the exponential function of both sides and simplify.
\end{proof}

%
\section{DISCUSSION}
In this paper, we have presented a method for deriving generating function involving Dickson polynomials along with some interesting functional equations using contour integration. The results presented were numerically verified for both real and imaginary and complex values of the parameters in the integrals using Mathematica by Wolfram.

\begin{thebibliography}{999}
%
 \bibitem{qu}L. Qu, C. Ding
\emph{Dickson Polynomials of the second kind that permute $\mathbb{Z}_m$}, SIAM J. Discret. Math., 28 (2) (2014), pp. 722-735
 %
 
 \bibitem{ding}C. Ding and J. Yuan, 
\emph{A family of skew Hadamard difference sets}, J. Combin. Theory Ser. A,
113 (2006), pp. 1526–1535.
%

 \bibitem{carlet}C. Carlet, C. Ding, and J. Yuan, 
 \emph{Linear codes from highly nonlinear functions and their secret sharing schemes}, IEEE Trans. Inform. Theory, 51 (2005), pp. 2089–2102.
%

\bibitem{yuan}J. Yuan, C. Carlet, and C. Ding, 
\emph{The weight distribution of a class of linear codes from
perfect nonlinear functions}, IEEE Trans. Inform. Theory, 52 (2006), pp. 712–717.
%

\bibitem{ding1}C. Ding and J. Yin, 
\emph{Signal sets from functions with optimum nonlinearity}, IEEE Trans.
Comm., 55 (2007), pp. 936–940
%

\bibitem{lidl}R. Lidl, G. L. Mullen, and G. Turnwald, 
\emph{Dickson Polynomials}, Longman Scientific and Technical, Harlow, UK, 1993.
 %
 
 \bibitem{reyn4} Reynolds, R.; Stauffer, A.
{A Method for Evaluating Definite Integrals in Terms of Special Functions with Examples}.  \emph{Int. Math. Forum} \textbf{2020}, \emph{15}, 235--244, doi:10.12988/imf.2020.91272 
%

\bibitem{dlmf} Olver, F.W.J.; Lozier, D.W.; Boisvert, R.F.; Clark, C.W. (Eds.)
 \emph{NIST Digital Library of Mathematical Functions}; U.S. Department of Commerce, National Institute of Standards and Technology: Washington, DC, USA; Cambridge University Press: Cambridge, UK, 2010; With 1 CD-ROM (Windows, Macintosh and UNIX). MR 2723248 (2012a:33001).
%

  \bibitem{milgram}Milgram, 
 \emph{The Generalized Integro-Exponential Function}, Mathematics of Computation Volume 44. Number 170, April 1985. Pages 443-45.
 %
 
 \bibitem{grad} Gradshteyn, I.S.; Ryzhik, I.M.
\emph{Tables of Integrals, Series and Products}, 6th ed.; Academic Press: Cambridge, MA, USA, 
 2000.
%

\bibitem{luke}Yudell L. Luke,
 \emph{The Special Functions and Their Approximations}, Elsevier Science, 1969, pp.1-348, English.
 %
 
 \bibitem{aslam}Chaudhry, M. Aslam, and Syed M. Zubair.,
 \emph{On a class of incomplete gamma functions with applications}. Boca Raton: 2002, Chapman \& Hall/CRC.
 %
 
  \bibitem{zimmer}editors, Alf J. van der Poorten, Igor Shparlinski, Horst G. Zimmer. 
 \emph{Number Theoretic and Algebraic Methods in Computer Science : Proceedings of the International Conference}, Moscow, June/July 1993. Singapore ; River Edge, NJ :World Scientific, 1995.
 %
 
  \bibitem{mullen}Mullen, Gary L.; Panario, Daniel (2013), 
 \emph{Handbook of Finite Fields}, CRC Press, ISBN 978-1-4398-7378-6
 %

%
\end{thebibliography}
\end{document}